\documentclass{amsart}
\usepackage[utf8]{inputenc}
\usepackage{amsmath, amssymb, amsthm}
\usepackage{amscd,tikz,tikz-cd}
\usepackage{graphicx, enumitem}
\usepackage[margin=1in]{geometry}

\usepackage[mathscr]{euscript}
\usepackage{graphicx}
\usepackage{multicol} 

\usepackage{tikz}
\usepackage{tikz-cd}
\usepackage{cite}
\usepackage{mathtools}

\usepackage{verbatim}

\newcommand{\R}{\mathbb{R}}

\newcommand{\lists}[1]{\begin{enumerate} #1 \end{enumerate}}

\theoremstyle{plain}
\newtheorem{thm}{Theorem}[section]
\newtheorem*{thm*}{Theorem}

\newtheorem*{lemma*}{Lemma}

\newtheorem*{prop*}{Proposition}

\newtheorem*{cor*}{Corollary}

\theoremstyle{definition}
\newtheorem{defn}{Definition}[section]
\newtheorem*{defn*}{Definition}

\theoremstyle{remark}

\title{An infinite $\{3,7\}$-surface}
\author{Dami Lee}
\email{damilee@uw.edu} 
\address{University of Washington, Seattle, WA 98195} 
\date{}

\begin{document}

\maketitle

\begin{abstract}
A classical question in geometry is whether surfaces with given geometric features can be realized as embedded surfaces in Euclidean space. In this paper, we construct an immersed, but not embedded, infinite $\{3,7\}$-surface in $\R^3$ that is a cover of Klein quartic.\end{abstract}

\tableofcontents

\section{Introduction} 

A classical question in geometry is whether surfaces with given geometric features can be realized as embedded surfaces in Euclidean space. In this paper, our goal is to construct a polyhedral surface that observes the eight-step geodesic (the Petrie polygon that connects the midpoints of the edges of polygons) on the $\{3,7\}$-tiling of Klein quartic described in \cite{weber1998klein} and \cite{schulte1985}. The symbols $\{p,q\}$ are called Schl{\"a}fli symbols that describe a surface tiled by $p$-sided regular polygons with $q$ faces incident to each vertex.

In \cite{schulte1985}, the authors answer the question whether the $\{3,7\}$-tiling admits a geometric realization as a \emph{finite} polyhedron in Euclidean space. Another attempt of a geometric realization of Klein quartic appears on the sculpture ``The Eightfold Way'' at MSRI, as a $\{7,3\}$-tiling by 24 heptagons. Although the gluing pattern of the heptagons is regular on the sculpture, the heptagons themselves are not.

Inspired by the theory of triply periodic minimal surfaces, we are interested in the realization of the $\{3,7\}$-tiling as a quotient of a periodic (hence infinite) surface embedded in $\R^3.$ To identify the underlying structure of a surface, the more symmetry we have, the easier it is. Related research has been done in \cite{coxeter1937regular} where the authors find triply periodic polyhedral surfaces ($\{4,6\},$ $\{6,4\},$ and $\{6,6\}$) that observe large symmetry groups. In \cite{lee2017} and \cite{lee2018}, Lee expanded this result and showed that certain known surfaces can be obtained as quotients of triply periodic polyhedral surfaces. Here, quotients are taken by the periodicity, i.e., the lattice of translations. A triply periodic $\{3,8\}$-tiling has a compact quotient surface that yields the genus three Fermat quartic \cite{lee2017}, a $\{3,12\}$-tiling Schoen's minimal I-WP surface \cite{lee2018}, and a $\{4,5\}$-tiling the genus four Bring's curve \cite{lee2018}. 

For the $\{3,7\}$-tiling, it is unknown if a construction of a periodic tiling is feasible or not. In this paper, we show that there is an infinite surface immersed in $\R^3$ that is a cover of the $\{3,7\}$-tiling.

\begin{thm*} There exists an infinite $\{3,7\}$-surface immersed in $\R^3$ that is a cover of Klein quartic.\end{thm*}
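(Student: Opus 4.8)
The plan is to realize the Klein quartic's regular triangulation as the base of an infinite, screw-symmetric polyhedral surface assembled entirely from congruent equilateral triangles. First I would pin down the combinatorics. Writing $S$ for the Klein quartic with its $\{3,7\}$-triangulation $\mathcal T$, Euler's formula together with the incidence relations $3F = 2E$ and $7V = 2E$ gives
\[
F = 56,\qquad E = 84,\qquad V = 24,\qquad V - E + F = -4 = 2 - 2\cdot 3,
\]
so $S$ has genus $3$ and automorphism group $G\cong\mathrm{PSL}(2,7)$ of order $168$ acting regularly on flags. The map is of Petrie type $\{3,7\}_8$: the Petrie polygon, i.e.\ the eight-step geodesic threading the edge-midpoints, closes after exactly eight edges, and the corresponding Petrie automorphism $\sigma_0\in G$ has order $8$. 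This distinguished cyclic symmetry is the one I would promote to a Euclidean motion.

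The source of every difficulty is that seven equilateral triangles around a vertex subtend total angle $7\cdot 60^\circ = 420^\circ > 360^\circ$, so each vertex of a faithful $\{3,7\}$-realization is a saddle with angle excess and the surface carries strictly negative combinatorial curvature. This is what makes a finite embedded realization delicate and motivates passing to an infinite, immersed cover. Concretely, I would seek a screw motion $\sigma$ of $\R^3$ — a rotation about a fixed axis composed with a translation along it — of infinite order that \emph{covers} the order-$8$ Petrie automorphism, in the sense that the intended map $\pi$ satisfies $\pi\circ\sigma = \sigma_0\circ\pi$. Then $\sigma^{8}$ covers $\sigma_0^{8} = \mathrm{id}$ and generates an infinite cyclic deck group $\langle\sigma^{8}\rangle\cong\Z$, so $\pi\colon\tilde S\to S$ is the $\Z$-cover that unrolls the eight-step Petrie geodesic into a helix winding about the screw axis.

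With the symmetry fixed, the heart of the matter is a rigid \emph{fundamental slab}: a finite patch $P$ of congruent equilateral triangles realizing one $\sigma$-step, mapping onto a fundamental domain of $\langle\sigma_0\rangle$ on $\mathcal T$ (which consists of $56/8 = 7$ triangles), and whose bounding curves are interchanged by $\sigma$, i.e.\ $\partial^{+}P = \sigma(\partial^{-}P)$. I would assign coordinates to the vertices of $P$ and impose that every face is equilateral of the common edge length, that every interior vertex is exactly $7$-valent, and that the screw identification of the two boundary curves holds; the screw symmetry should cut the independent unknowns down to a tractable system, which I then solve. Setting $\tilde S = \bigcup_{n\in\Z}\sigma^{n}(P)$ produces an infinite polyhedral surface on which $\langle\sigma\rangle$ acts freely by isometries and in which every vertex is $7$-valent.

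Finally I would verify the three remaining points. For the covering statement, define $\pi$ simplicially on $P$ by matching its flags to those of the chosen fundamental domain in $\mathcal T$ and extend it by $\pi\circ\sigma = \sigma_0\circ\pi$; because the star of each vertex is on both sides ``seven triangles around a vertex,'' $\pi$ restricts to an isomorphism on every vertex star, hence is an unbranched local homeomorphism, i.e.\ a genuine covering map of the infinite surface $\tilde S$ onto $S$. For the immersion, I would check the realization is nondegenerate (no triangle collapses and no pair of adjacent faces folds flat), so $\tilde S\to\R^3$ is a local embedding, while the angle excess forces the globally wandering helical sheets to cross, giving immersed-but-not-embedded. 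The main obstacle is exactly the rigid slab: requiring genuinely equilateral faces and exact valence $7$ while honoring the screw identification is an overdetermined system of nonlinear equations, and the real work is to use the full Petrie symmetry to reduce it to something solvable and then to confirm that the solution immerses without degeneration.
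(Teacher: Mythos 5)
Your plan is a genuinely different route from the paper's, but as written it has a gap at its load\-bearing step: the existence of the rigid fundamental slab is never established. Everything else in the argument --- the covering map, the free $\langle\sigma\rangle$-action, the immersion --- is conditional on exhibiting a patch of seven congruent equilateral triangles in $\R^3$ whose two boundary curves are exchanged by a screw motion and whose orbit closes up into a $7$-valent triangulated surface. You correctly identify this as ``an overdetermined system of nonlinear equations,'' but you neither solve it nor give any existence argument (a degree-of-freedom count, an intermediate-value argument, explicit coordinates), so the theorem is not proved; it is not even clear that a screw-symmetric realization exists. The paper's construction is not of this form: it builds the surface as a decoration of a trivalent graph, replacing vertices by triangular prisms and edges by square anti-prisms glued along their square faces (which automatically produces seven equilateral triangles at each surface vertex and requires no nonlinear solving beyond fixing the anti-prism height), and then verifies the eight-step Petrie closure on an explicit $56$-triangle piece and the failure of embeddedness by computing normal vectors of the boundary squares.

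There is also a group-theoretic error that undercuts the symmetry you want to promote to a Euclidean motion. $\mathrm{PSL}(2,7)$ acts regularly on the $168$ darts, not on the $336$ flags, and it contains no element of order $8$ (its element orders are $1,2,3,4,7$). The Petrie automorphism of $\{3,7\}_8$ of order $8$ lies in the extended group $\mathrm{PGL}(2,7)$ and reverses orientation on the surface; a screw motion of $\R^3$ preserves ambient orientation, so to cover $\sigma_0$ it would have to reverse the normal field of your immersed surface, a compatibility you would need to check, and the claim ``$\sigma_0\in G$ has order $8$'' with $G\cong\mathrm{PSL}(2,7)$ is false as stated. Finally, even granting the slab, identifying $\tilde S/\langle\sigma^{8}\rangle$ with the Klein map rather than with some other $56$-triangle $\{3,7\}$-quotient requires verifying that \emph{every} Petrie polygon, not only the one wound around the axis, closes after eight steps; you gesture at matching flags but do not carry this out, whereas this closure condition is exactly what the paper checks on its quotient piece.
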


\begin{figure}[htbp] %  figure placement: here, top, bottom, or page
	\centering
	\includegraphics[width=6in]{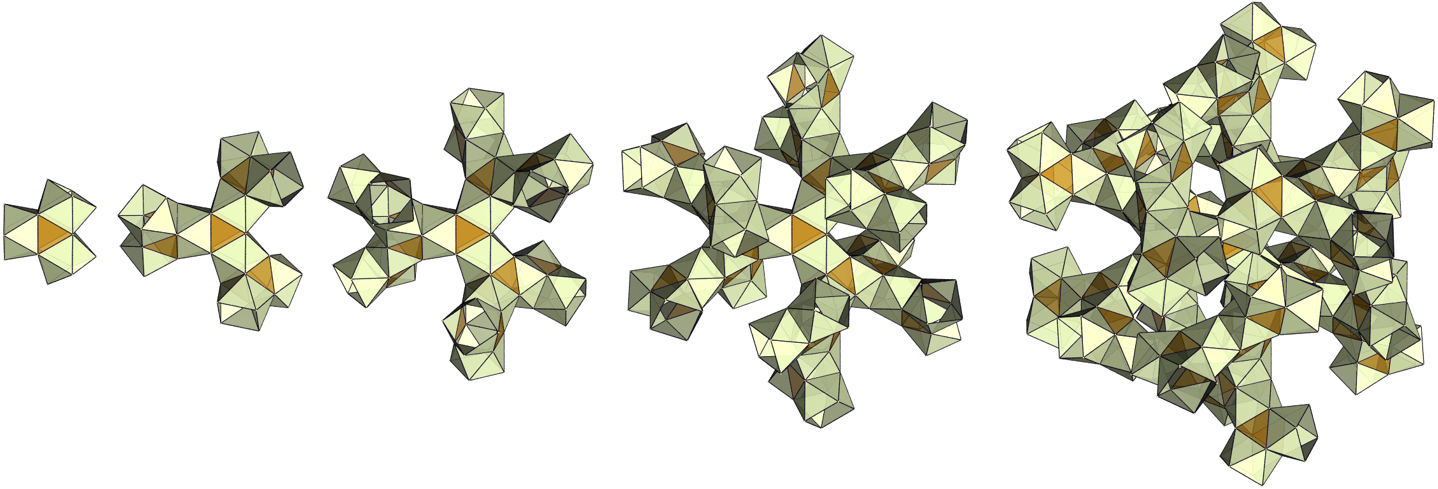}
	\caption{The evolution of an infinite $\{3,7\}$-surface.}
	\label{fig: evolution}
\end{figure}

We will explain the construction of the infinite surface in Section~\ref{sec: construction}. For now, we claim that after one more iteration from Figure~\ref{fig: evolution}, the surface will self-intersect. Hence this construction yields a surface that can only be immersed, but not embedded in $\R^3.$ We need to define what we mean by an infinite surface covering a compact surface, i.e. what we mean by the covering map. If the surface were embedded in $\R^3$ and furthermore periodic under translations, one can consider the compact quotient of the infinite surface by the lattice of translations.

We will show that the infinite $\{3,7\}$-surface is a covering of Klein quartic. We take a piece of the polyhedral surface tiled by 56 triangles and identify the edges so that a Petrie polygon in any direction is closed after eight steps (Figure~\ref{fig: funda}). This eight-step geodesic is identical to the one on Klein quartic \cite{weber1998klein}. The eight-step geodesic is the Petrie polygon that connects the midpoints of the edges of the $\left(\frac{2\pi}{7},\frac{2\pi}{7},\frac{2\pi}{7}\right)$-triangles. Alternatively, it can be seen as the strip of eight triangles in Figure~\ref{fig: 237 tiling}.

\begin{figure}[htbp] %  figure placement: here, top, bottom, or page
	\centering
	\includegraphics[width=2.5in]{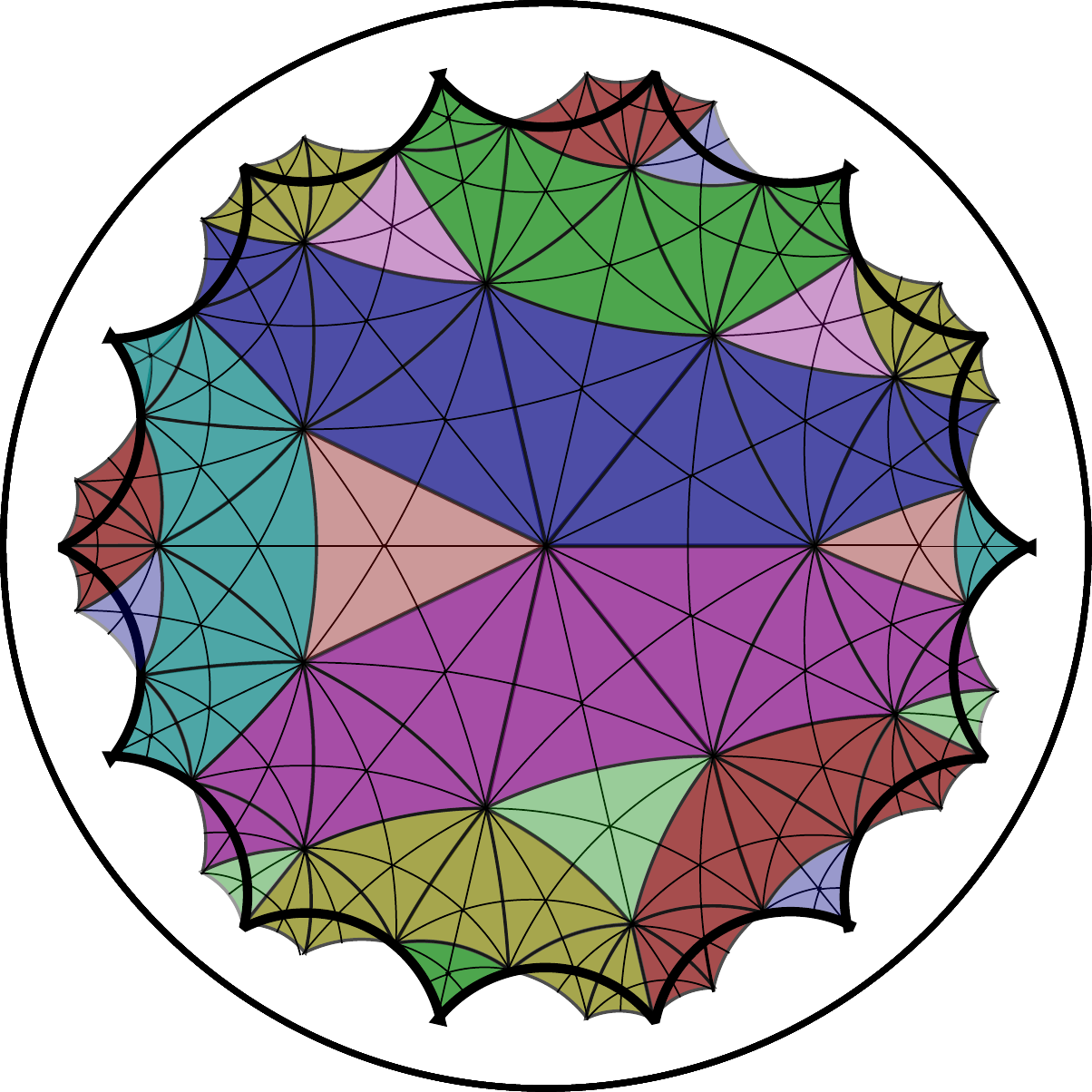}
	\caption{$(2,3,7)$-tiling on Klein's quartic}
	\label{fig: 237 tiling}
\end{figure}

This paper does not prove nor disprove the existence of an embedded or periodic $\{3,7\}$-surface whose compact quotient (via the lattice of translations) has the same Riemann surface structure as Klein quartic.

The author would like to thank Charles Camacho for the encouragement to publish this project, and the anonymous referees for their detailed reports that helped significantly improving this manuscript.

\section{Definition}

The idea of this construction is inspired by the notion of a \emph{decoration} of an embedded graph (Chapter 4 of the author's thesis \cite{lee2018}).

\begin{defn} Given a graph $\Gamma$ embedded in $\R^3,$ a \emph{decoration} of $\Gamma$ is a polyhedron achieved by replacing the 0-simplices and 1-simplices with convex polyhedral solids (including the empty solid) so that \lists{
\item there is a deformation retract of the polyhedron to the graph, and \item the solids are identified only along faces.}
In essence, if a 0-simplex and a 1-simplex in $\Gamma$ are incident, then their corresponding replacement solids in the decoration are identified along a face.\end{defn}

\section{Main result}\label{sec: construction} 

\begin{thm}\label{thm: construction} There exists an infinite $\{3,7\}$-surface immersed in $\R^3$ that is a cover of Klein's quartic.\end{thm}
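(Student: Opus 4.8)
The plan is to construct the infinite $\{3,7\}$-surface explicitly as a decoration of a suitable graph $\Gamma$ embedded in $\R^3$, and then to exhibit a covering map from this surface onto Klein's quartic. I would proceed in the following stages.

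\textbf{Stage 1: Building the combinatorial fundamental piece.} First I would isolate the local combinatorial data that the surface must satisfy. A $\{3,7\}$-tiling requires that seven equilateral triangles meet at every vertex, which forces an angle defect of $7\cdot\frac{\pi}{3}-2\pi=\frac{\pi}{3}>0$ at each vertex; this positive angular excess is exactly why the tiling is hyperbolic and cannot close up into a small finite Euclidean polyhedron. The key observation, following \cite{weber1998klein} and \cite{schulte1985}, is that Klein's quartic carries a $(2,3,7)$-triangulation whose Petrie polygons (eight-step geodesics) close after eight steps, and that a fundamental domain for this structure consists of $56$ triangles. I would therefore take as the fundamental building block the piece of polyhedral surface tiled by these $56$ triangles (Figure~\ref{fig: funda}), and verify directly that each interior vertex is seven-valent and that the edge identifications make every Petrie polygon close after exactly eight steps.

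\textbf{Stage 2: Realizing the block as a decoration in $\R^3$.} Next I would realize this $56$-triangle piece geometrically by decorating a graph $\Gamma$: replacing the vertices and edges of $\Gamma$ with convex polyhedral solids glued along faces, as in the Definition of the preceding section, so that the boundary triangles of the decoration carry the $\{3,7\}$ combinatorics. Here I would use the handle-body structure suggested by Figure~\ref{fig: evolution}: each stage attaches new solids along the free boundary faces of the previous stage, producing a nested sequence of polyhedra whose union is the infinite surface. The deformation-retract condition and the face-identification condition from the Definition guarantee that the result is a genuine polyhedral surface and that the local $\{3,7\}$ structure propagates outward from one copy of the fundamental block to the next.

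\textbf{Stage 3: Iterating to infinity and checking immersion.} I would then argue that this attachment process can be continued indefinitely, each iteration reproducing the same local vertex figure, so that the limiting object is a complete infinite $\{3,7\}$-surface. The excerpt already flags (``after one more iteration\ldots the surface will self-intersect'') that the map into $\R^3$ is only an immersion and not an embedding; I would make this precise by locating the first pair of faces that coincide in $\R^3$ and confirming that, nonetheless, the map is locally injective and simplicial, hence an immersion of the abstract surface. The main obstacle I expect is precisely this geometric step: ensuring that the angular excess can be absorbed by the three-dimensional decoration so that each triangle stays equilateral and planar while the whole surface remains locally embedded, even as global self-intersections become unavoidable.

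\textbf{Stage 4: The covering map.} Finally, I would define the covering map onto Klein's quartic. Since the edge identifications in Stage 1 were chosen so that the eight-step geodesics match those on Klein's quartic, the quotient of one fundamental $56$-triangle block under these identifications is combinatorially and conformally Klein's quartic; the infinite surface is then built from infinitely many such blocks whose local structure is identical, so the map collapsing each block to the single quotient block is a simplicial local homeomorphism, i.e.\ a covering in the combinatorial sense. The delicate point is to define ``covering'' correctly in the absence of a translation lattice (the surface is not periodic), which the excerpt explicitly raises; I would handle this by taking the deck-transformation action generated by the attachment maps relating consecutive blocks and verifying that the quotient is exactly the $56$-triangle Klein quartic, so that the projection is a covering map of the surface onto a compact orbifold-free quotient.
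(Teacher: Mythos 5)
Your outline follows the same broad strategy as the paper---decorate a graph with polyhedral solids, extract a $56$-triangle fundamental piece whose Petrie polygons close after eight steps, and identify its quotient with Klein's quartic---but it leaves the central constructive step as a promissory note. Stage 2 is where the theorem actually lives, and you never say \emph{which} solids to use or why the exposed faces would be equilateral triangles meeting seven to a vertex. The paper's proof consists precisely of this data: a trivalent graph whose $0$-simplices are replaced by triangular prisms with square sides and whose $1$-simplices are replaced by square anti-prisms with equilateral triangular sides; the square faces are glued in pairs and disappear, and one checks that exactly seven triangles meet at each vertex (one prism base triangle plus three side triangles from each of the two adjacent anti-prisms). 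Without exhibiting such solids, the claim that ``the boundary triangles of the decoration carry the $\{3,7\}$ combinatorics'' is an assertion of the conclusion, not an argument. You also omit the chirality issue: the base square of an anti-prism can be glued to the prism with a twist in either of two directions, and one must argue that this choice is made once and then propagates consistently; skipping it leaves the iteration of Stage 3 ill-defined, and it is exactly this twist that determines whether the eight-step geodesic appears.

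A second gap is in Stage 4. You appeal to a deck-transformation action ``generated by the attachment maps,'' but the surface is not periodic, and the paper explicitly does \emph{not} produce a group of Euclidean symmetries: it computes the normal vectors of the boundary squares $E$ and $F$ of the $56$-triangle piece and shows they are not parallel, so the required identifications cannot be realized by translations of $\R^3$. The covering is therefore purely combinatorial---abstract edge identifications of the $56$-triangle piece yielding a genus-three surface exhibiting the eight-step geodesic of \cite{weber1998klein}---and any deck group would have to act on the abstract surface, not inside $\R^3$; as written, your construction of the quotient presupposes a geometric symmetry that provably does not exist. A minor slip in Stage 1: $7\cdot\frac{\pi}{3}-2\pi=\frac{\pi}{3}$ is the angular excess, not the angle defect (the defect is $-\frac{\pi}{3}$).
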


\begin{proof} We build the surface with two types of Archimedean solids: a prism over an equilateral triangle and an anti-prism over a square. We let the quadrilateral sides of the triangular prism be squares, and the triangular sides of the anti-prism be equilateral triangles. 

We decorate a trivalent graph with triangular prisms replacing the 0-simplices, and square anti-prisms replacing the 1-simplices. We begin with a triangular prism and glue the base squares of the anti-prisms to the side squares of the prism. Hence, the square faces do not appear on the surface, and there are seven incident triangles at each vertex of the decoration (Figure~\ref{fig: evolution}).

Next, note that the base of the anti-prisms are squares, hence there are two different pairs of edges that can be identified to the edges that correspond to the ``height'' of the triangular prism (Figure~\ref{fig: chirality}). In other words, we have to choose in which direction we should ``twist'' the surface at each step of gluing the solids. We claim that this choice has to be made only once and the rest depends on the choice made at the first step, and that a twist in either direction (at the first step) yields a surface that observes the eight-step geodesic. Consider the horizontal reflection on Figure~\ref{fig: 237 tiling}. Iterations following the twist on the right of Figure~\ref{fig: chirality}, we obtain Figure~\ref{fig: evolution}.

\begin{figure}[htbp] %  figure placement: here, top, bottom, or page
	\centering
	\includegraphics[width=4in]{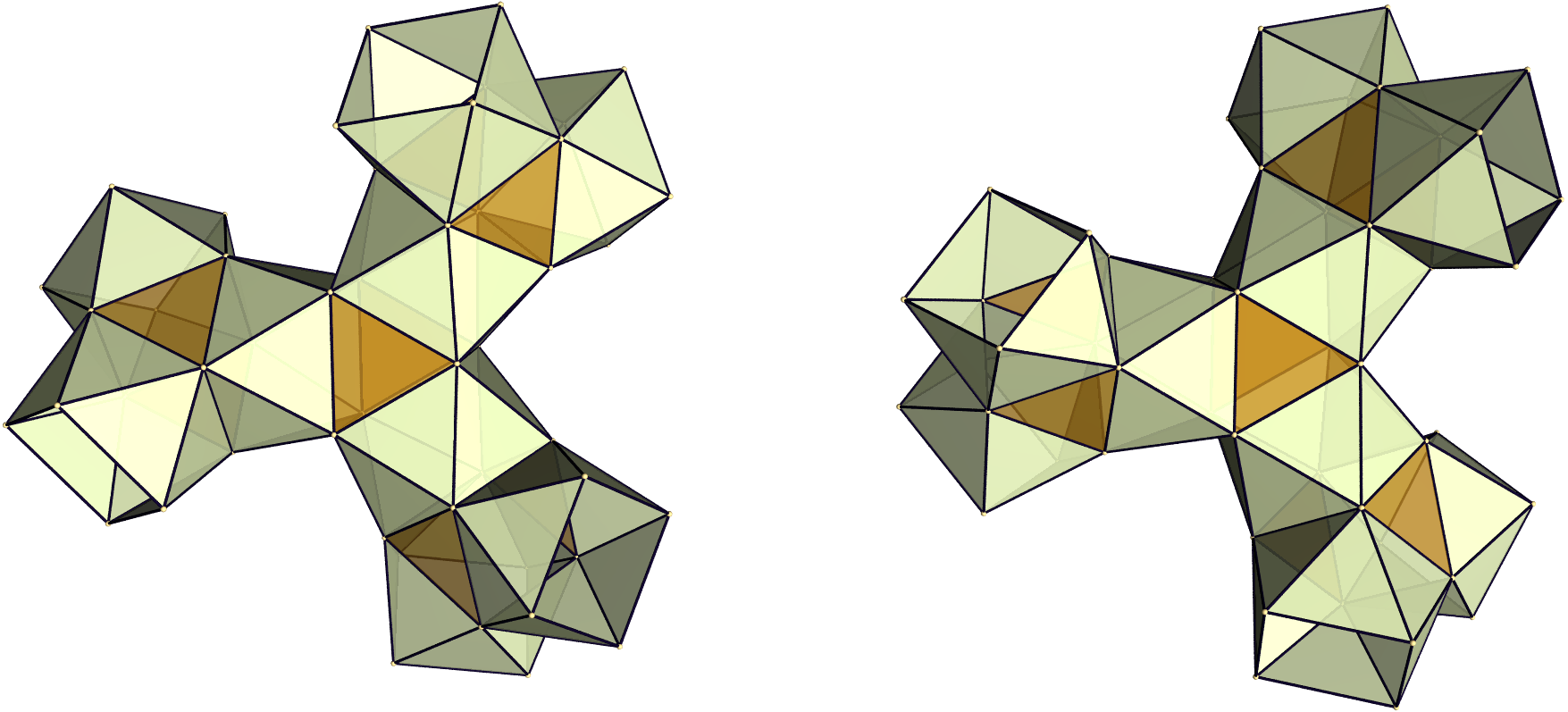}
	\caption{Twists in opposite directions.}
	\label{fig: chirality}
\end{figure}

Now we show that the surface is not embedded in $\R^3.$ One can check that by identifying the blue squares in Figure~\ref{fig: funda} ($E$ with $F,$ $E'$ with $F',$ and $E''$ with $F''$), one obtains a genus three surface that exhibits the eight-step geodesic identical to the one on Klein quartic. We will show by computing their normal vectors that these squares cannot be identified by Euclidean translation.

\begin{figure}[htbp] %  figure placement: here, top, bottom, or page
	\centering
	\includegraphics[width=2.5in]{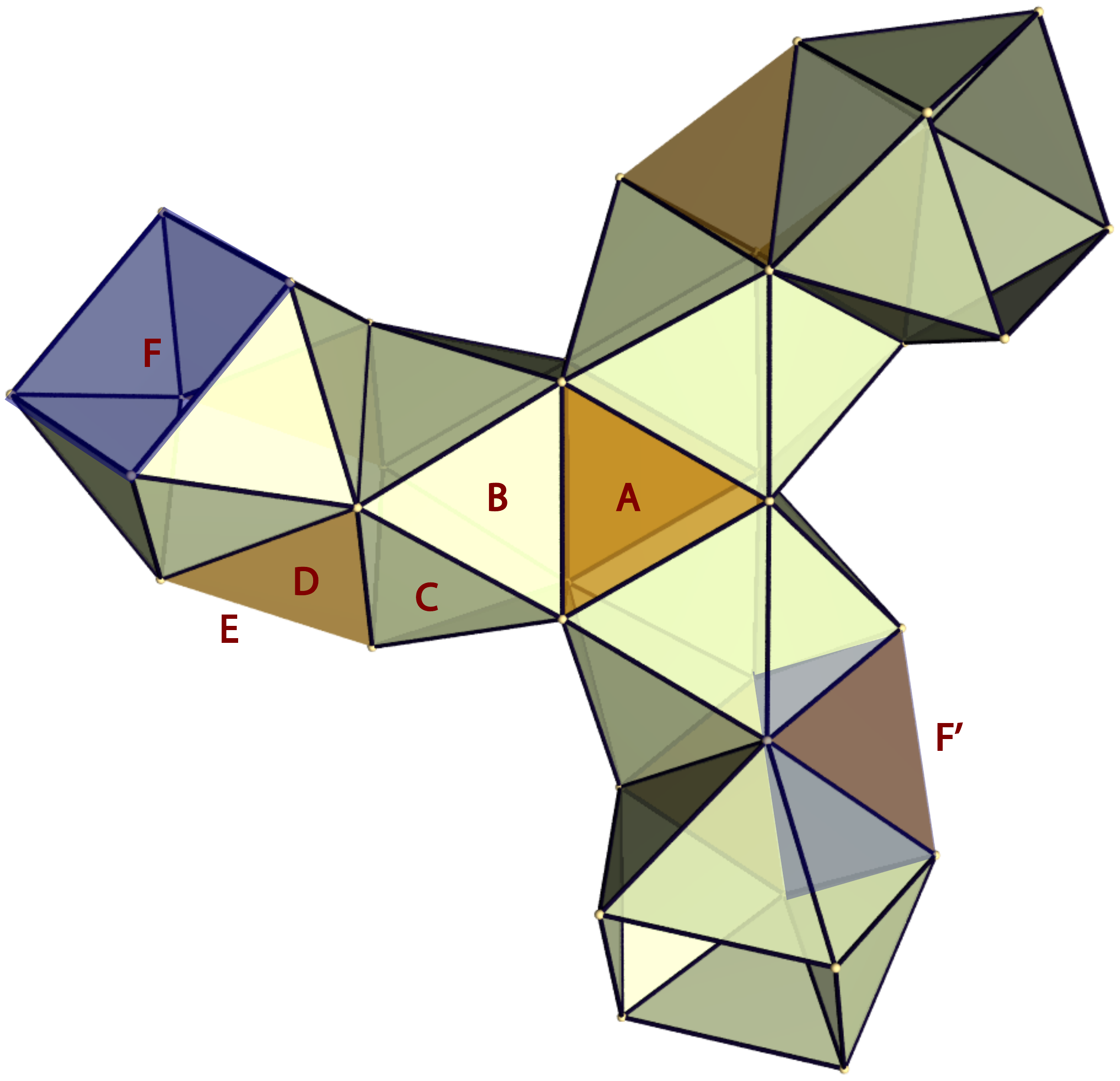}
	\caption{The quotient of the infinite $\{3,7\}$-surface.}
	\label{fig: funda}
\end{figure}

Here, in Figure~\ref{fig: funda}, we view the $x$-axis as pointing in the right direction, $y$-axis in the top direction, and $z$-axis pointing towards the reader. 
We locate the anti-prism including the triangle labeled $B$ so that its vertices lie at $(-h,\pm1,0),$ $(-h,0,\pm1),$ $\left(h,\frac{\sqrt{2}}{2},\pm\frac{\sqrt{2}}{2}\right)$ and $\left(h,-\frac{\sqrt{2}}{2},\pm\frac{\sqrt{2}}{2}\right).$ To ensure that the sides of the anti-prism are regular triangles, the value of $h$ must be $\frac{1}{\sqrt[4]{8}}\approx 0.59460356,$ %0.594603557501361
hence all edges that appear on the polyhedral surface have length $\sqrt{2}.$ The triangular prism including the triangle labeled $A$ shares four of its vertices $\left(-h,\frac{\sqrt{2}}{2},\pm\frac{\sqrt{2}}{2}\right)$ with the anti-prism, and has two other vertices at $\left(h+\sqrt{\frac{3}{2}},0,\pm\frac{\sqrt{2}}{2}\right).$ It follows that the normal vector to triangle $A$ is $(0,0,1),$ and the (non-unit) normal vector to triangle $B$ is $\left(\sqrt{2}-1,0,2 \sqrt{2} h \right).$ 

We calculate the normal vectors to the labeled triangles and squares. Triangle $C$ has its vertices at $(h,-\frac{\sqrt{2}}{2},\frac{\sqrt{2}}{2}),$ $(-h,0,1),$ and $(-h,-1,0),$ and its (non-unit) normal vector is $(1-\sqrt{2},-2 h,2 h);$ triangle $D$ has its vertices at $(-h,-1,0),$ $(-h,0,1),$ and $(d,-0.5,0.5),$ where $d\approx -1.81934843$ %1.8193484288929496
is the solution to 

\begin{align*} (x+h)^2 + y^2 + (z-1)^2 = 2, \\
(x+h)^2 + (y+1)^2 + z^2 = 2,\\
(x-h)^2 + \left(y+\frac{1}{\sqrt{2}}\right)^2 + \left(z-\frac{1}{\sqrt{2}}\right)^2 = \left(2 h+\sqrt{\frac{3}{2}}\right)^2+ \left(1-\frac{1}{\sqrt{2}}\right)^2,\\
y=-\frac{1}{2}, z =\frac{1}{2}.
\end{align*}  

The normal vector to $D$ is $\left(0,-\frac{1}{\sqrt{2}},\frac{1}{\sqrt{2}}\right);$ square $E$ has its vertices at $(-h,0,-1),$ $(-h,-1,0),$ $(-d,-0.5,0.5),$ and $(-d,0.5,-0.5),$ and its (non-unit) normal vector is $(\sqrt{2},\sqrt{3},\sqrt{3}).$ Lastly, the normal vector to square $F'$ is $\left(6,\sqrt{6}+4\sqrt{3},\sqrt{6}-4\sqrt{3}\right).$ We can rotate the normal of $E$ and $F'$ by 120 or 240 degrees to achieve the normal vectors to $E', E'', F,$ and $F''.$ We rotate the normal of $F'$ by 240 degrees about the $z$-axis and the normal of $F,$  $(-6\sqrt{3}-\sqrt{6},2-\sqrt{2},-48+12\sqrt{2}).$ The normal vectors to squares $E$ and $F$ are not parallel to each other, hence they cannot be identified by translation in $\R^3.$ By the order-three rotational symmetry about the center of triangle $A,$ the other boundaries of the quotient ($E'$ and $F',$ $E''$ and $F''$) can be identified similarly.
\end{proof}

\addcontentsline{toc}{section}{Bibliography}

\end{document}